\theoremstyle{plain}
\newtheorem{theorem}{Theorem}
\newtheorem{lemma}[theorem]{Lemma}
\newtheorem{corollary}[theorem]{Corollary}
\theoremstyle{definition}
\newtheorem{question}[theorem]{Question}
\newcommand{\dom}{\mathsf{dom}}
\newcommand{\ran}{\mathsf{ran}}
\newcommand{\add}{\mathsf{add(meager)}}
\newcommand{\id}{\mathsf{id}}
\newcommand{\di}{\mathsf{d}}
\newcommand{\MAsigma}{\mathsf{MA(\sigma\textrm{-}centered)}}
\newcommand{\ZFC}{\mathsf{ZFC}}
\newcommand{\Gd}{\mathsf{G_\delta}}
\newcommand{\Ss}{\mathcal{S}}
\newcommand{\CC}{\mathcal{C}}
\newcommand{\DD}{\mathcal{D}}
\newcommand{\TT}{\mathcal{T}}
\newcommand{\GG}{\mathcal{G}}
\newcommand{\HH}{\mathcal{H}}
\newcommand{\VV}{\mathcal{V}}
\newcommand{\ZZZ}{\mathbb{Z}}
\newcommand{\QQQ}{\mathbb{Q}}
\newcommand{\RRR}{\mathbb{R}}
\newcommand{\cccc}{\mathfrak{c}}
\begin{document}

\title{A homogeneous space whose complement is rigid}

\author{Andrea Medini}
\address{Kurt G\"odel Research Center for Mathematical Logic
\newline\indent University of Vienna
\newline\indent W\"ahringer Stra{\ss}e 25
\newline\indent A-1090 Wien, Austria}
\email{andrea.medini@univie.ac.at}
\urladdr{http://www.logic.univie.ac.at/\~{}medinia2/}

\author{Jan van Mill}
\address{KdV Institute for Mathematics
\newline\indent University of Amsterdam
\newline\indent Science Park 904
\newline\indent P.O. Box 94248
\newline\indent 1090 GE Amsterdam, The Netherlands}
\email{j.vanMill@uva.nl}
\urladdr{http://staff.fnwi.uva.nl/j.vanmill/}

\author{Lyubomyr Zdomskyy}
\address{Kurt G\"odel Research Center for Mathematical Logic
\newline\indent University of Vienna
\newline\indent W\"ahringer Stra{\ss}e 25
\newline\indent A-1090 Wien, Austria}
\email{lyubomyr.zdomskyy@univie.ac.at}
\urladdr{http://www.logic.univie.ac.at/\~{}lzdomsky/}

\keywords{Rigid, homogeneous, countable dense homogeneous, relatively homogeneous, relatively countable dense homogeneous.}

\thanks{The first-listed and third-listed authors were supported by the FWF grant I 1209-N25. The second-listed author acknowledges generous hospitality and support from the Kurt G\"odel Research Center for Mathematical Logic. The third-listed author also thanks the Austrian Academy of Sciences for its generous support through the APART Program.}

\date{October 2, 2014}

\begin{abstract}
We construct a homogeneous subspace of $2^\omega$ whose complement is dense in $2^\omega$ and rigid. Using the same method, assuming Martin's Axiom, we also construct a countable dense homogeneous subspace of $2^\omega$ whose complement is dense in $2^\omega$ and rigid.
\end{abstract}

\maketitle

All spaces are assumed to be separable and metrizable. Our reference for general topology will be \cite{vanmilli}. Given a space $X$, we will denote by $\HH(X)$ the group of homeomorphisms of $X$ with the operation of composition. Recall that a space $X$ is \emph{rigid} if $\HH(X)=\{\id\}$. Every space of size at most $1$ is a trivial example of rigid space. We refer the reader to \cite[Section 13]{vandouwen} for the early history of rigid spaces. Let us only mention that the first non-trivial example of rigid space was the zero-dimensional subspace of $\RRR$ constructed by Kuratowski in \cite{kuratowski}.

Recall the following definitions. A space $X$ is \emph{homogeneous} if for every pair $(x,y)$ of points of $X$ there exists $h\in\HH(X)$ such that $h(x)=y$. A space $X$ is \emph{countable dense homogeneous} if for every pair $(A,B)$ of countable dense subsets of $X$ there exists $h\in\HH(X)$ such that $h[A]=B$. These are classical notions, and they have been studied in depth (see for example the survey \cite{arkhangelskiivanmill}).

Examples of rigid spaces abound in the literature. See \cite{degrootwille} for examples of rigid continua of positive finite dimension. Other examples of this type were given in \cite{ancelsingh} and \cite{ancelduvallsingh}, with the additional property that their square is a manifold, hence homogeneous and countable dense homogeneous (see \cite[Theorem 1.6.9 and Corollary 1.6.8]{vanmilli}). For a non-trivial ``very'' rigid continuum, see \cite{cook}. A rigid space whose square is homeomorphic to the Hilbert cube $[0,1]^\omega$, hence homogeneous and countable dense homogeneous (see \cite[Theorem 1.6.6 and Theorem 1.6.9]{vanmilli}), was given in \cite{vanmillr}. For other infinite-dimensional examples, see \cite{dijkstrac} and \cite{dijkstras}. A non-trivial zero-dimensional rigid space whose square is homogeneous was given in \cite{lawrence}.\footnote{\,On the other hand, the existence of a non-trivial zero-dimensional rigid space whose square is countable dense homogeneous is an open problem (see \cite[Question 1.11]{medini}).} However, by \cite{vanengelenmillersteel}, there are no non-trivial rigid zero-dimensional Borel spaces.

The interest of most of the above examples lies in the fact that rigid spaces have as few homeomorphisms as possible, while homogeneous spaces and countable dense homogeneous spaces must have ``many'' homeomorphisms. In this article, we will show how to obtain both extremes simultaneously. More precisely, we will construct subspaces of $2^\omega$ with a prescribed homogeneity-type property, while making sure at the same time that their complements are dense in $2^\omega$ and rigid. Our main results are Theorem \ref{homogeneous} and Theorem \ref{cdh}, whose proofs are both based on the general method given by Theorem \ref{general}. For other examples of a similar flavor, see \cite{vanengelen}, \cite{vanengelenvanmill}, \cite{vanmillwattel} and \cite{shelah}.

We will say that a subspace $X$ of $2^\omega$ is \emph{relatively homogeneous} if for every pair $(x,y)$ of points of $X$ there exists $h\in\HH(2^\omega)$ such that $h[X]=X$ and $h(x)=y$. Every relatively homogeneous subspace of $2^\omega$ is obviously homogeneous, and Corollary \ref{hnotrh} gives a $\ZFC$ counterexample to the reverse implication. Similarly, we will say that a subspace $X$ of $2^\omega$ is \emph{relatively countable dense homogeneous} if for every pair $(A,B)$ of countable dense subsets of $X$ there exists $h\in\HH(2^\omega)$ such that $h[X]=X$ and $h[A]=B$. Every relatively countable dense homogeneous subspace of $2^\omega$ is obviously countable dense homogeneous, and Corollary \ref{cdhnotrcdh} shows that the reverse implication is not provable in $\ZFC$. This answers a question raised by the referee of the recent paper \cite{kunenmedinizdomskyy}, which was the original motivation for the research contained in this article.

\section{Preliminaries}

Recall that a space is \emph{crowded} if it is non-empty and has no isolated points. We will write $X\approx Y$ to mean that the spaces $X$ and $Y$ are homeomorphic. Given a subset $X$ of a space $Z$ and a subgroup $\HH$ of $\HH(Z)$, we will let
$$
\HH[X]=\bigcup_{h\in\HH}h[X]
$$
be the closure of $X$ under the action of $\HH$. Notice that $\HH[\HH[X]]=\HH[X]$. For simplicity, we will let $\HH(x)=\HH[\{x\}]$. Furthermore, given a group $\HH$ and a subset $\Ss$ of $\HH$, we will denote by $\langle\Ss\rangle$ the subgroup of $\HH$ generated by $\Ss$.

Given a surjection $\pi:X\longrightarrow Y$, we will say that a subset $S$ of $X$ is \emph{saturated} with respect to $\pi$ if $\pi^{-1}[\pi[S]]=S$. The proof of the following simple lemma is left to the reader.
\begin{lemma}\label{pi}
Let $\pi:X\longrightarrow Y$ be a continuous surjection between compact spaces. If $A\subseteq X$ is saturated with respect to $\pi$ then $\pi\upharpoonright A:A\longrightarrow\pi[A]$ is a closed continuous surjection.
\end{lemma}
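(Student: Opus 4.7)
The continuity and surjectivity of $\pi\upharpoonright A$ are immediate from the hypotheses, so all the work goes into verifying that this restriction is a closed map onto $\pi[A]$. My plan is to take an arbitrary relatively closed set $C\subseteq A$ and show that $\pi[C]$ is closed in $\pi[A]$ by expressing it as the intersection of $\pi[A]$ with a closed subset of $Y$.

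First I would write $C=A\cap D$ for some closed $D\subseteq X$. Since $X$ is compact, $D$ is compact, and then by continuity $\pi[D]$ is a compact, hence closed, subset of $Y$. Thus $\pi[A]\cap \pi[D]$ is closed in $\pi[A]$, and it suffices to prove the equality
\[
\pi[C]=\pi[A]\cap\pi[D].
\]
The inclusion $\pi[C]\subseteq\pi[A]\cap\pi[D]$ is trivial. For the reverse inclusion, take $y\in\pi[A]\cap\pi[D]$, so $y=\pi(a)=\pi(d)$ for some $a\in A$ and $d\in D$. Here is where saturation enters: since $\pi(d)=y\in\pi[A]$, we have $d\in\pi^{-1}[\pi[A]]=A$, so $d\in A\cap D=C$, and therefore $y=\pi(d)\in\pi[C]$.

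There is no real obstacle; the only thing to notice is that compactness of $X$ (which forces $D$ to be compact and hence $\pi[D]$ to be closed in $Y$) combines with saturation of $A$ to upgrade the general inclusion $\pi[A\cap D]\subseteq\pi[A]\cap\pi[D]$ to an equality. Neither the compactness of $Y$ nor any assumption that $A$ itself is closed is actually needed; compactness of $X$ together with saturation already does the job.
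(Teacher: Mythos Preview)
Your argument is correct. The paper actually leaves this lemma to the reader, so there is no proof to compare against; your approach---writing a relatively closed $C\subseteq A$ as $A\cap D$ with $D$ closed in $X$, pushing $D$ forward to a compact (hence closed) subset of $Y$, and using saturation to turn the inclusion $\pi[A\cap D]\subseteq\pi[A]\cap\pi[D]$ into an equality---is exactly the standard one. Your closing remark is also accurate: the only role of $Y$ is that compact subsets are closed in it, which in the paper's standing convention (separable metrizable) is automatic; full compactness of $Y$ is not used.
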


The following lemma, which originally appeared as \cite[Theorem 2.3]{vanmillh} in a slightly different form, will be the key to making our example homogeneous. We need some notation, which will be used throughout the entire paper. Let
$$
Q=\{x\in 2^\omega:\text{there exists }m\in\omega\text{ such that }x_n=x_m\text{ whenever }n\geq m\}.
$$
Given $q\in Q$, let $h_q\in\HH(2^\omega)$ be the homeomorphism defined by $h_q(x)=x+q$ for $x\in2^\omega$, where $+$ denotes addition modulo $2$. Let $\VV=\{h_q:q\in Q\}$, and observe that $\VV$ is a subgroup of $\HH(2^\omega)$.

\begin{lemma}\label{jan}
Assume that $X$ is a subspace of $2^\omega$ such that $\VV[X]=X$. Then $X$ is homogeneous.
\end{lemma}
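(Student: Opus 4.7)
The plan is to prove the stronger statement that for every pair $x,y\in X$ there is $h\in\HH(2^\omega)$ with $h[X]=X$ and $h(x)=y$; restricting such an $h$ to $X$ will give the desired self-homeomorphism of $X$. The map $h$ will be a ``countable piecewise translation'': outside the bad point $x$ it will coincide, piece by piece on a countable clopen partition, with a suitable element of $\VV$, so that the $\VV$-invariance of $X$ forces $h[X]=X$ automatically. Essentially all the work will lie in choosing the partition so that $h$ glues continuously at $x$ and at $y$.

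Explicitly, for each $i\in\omega$ let $s_i\in 2^{i+1}$ be the sequence that agrees with $x$ on the first $i$ coordinates and takes value $1-x_i$ at coordinate $i$, and define $t_i$ analogously from $y$. A short check (every $z\neq x$ lies in the unique $[s_i]$ with $i=\min\{k:z_k\neq x_k\}$) shows that $\{[s_i]:i\in\omega\}$ is a clopen partition of $2^\omega\setminus\{x\}$ and $\{[t_i]:i\in\omega\}$ partitions $2^\omega\setminus\{y\}$. Let $q_i\in Q$ be defined by $(q_i)_k=(s_i)_k+(t_i)_k$ for $k\leq i$ and $(q_i)_k=0$ otherwise, so that $h_{q_i}[[s_i]]=[t_i]$. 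Finally define $h:2^\omega\to 2^\omega$ by $h(x)=y$ and $h\upharpoonright[s_i]=h_{q_i}\upharpoonright[s_i]$ for each $i$.

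It remains to check three things: (a) $h\in\HH(2^\omega)$; (b) $h[X]=X$; (c) $h(x)=y$. Item (c) holds by construction. For (b), the $\VV$-invariance of $X$ gives $h_{q_i}[[s_i]\cap X]=[t_i]\cap X$ for each $i$, so $h[X\setminus\{x\}]=X\setminus\{y\}$, and combining with $h(x)=y\in X$ gives $h[X]=X$. For (a), bijectivity is immediate from the two partitions, and on each $[s_i]$ the map $h$ is a restriction of the continuous $h_{q_i}$, so $h$ is continuous away from $x$; the argument for $h^{-1}$ away from $y$ is symmetric. The heart of the proof, and the step I would flag as the main subtlety, is continuity at $x$ (and at $y$ for $h^{-1}$): this reduces to the nestings $[s_i]\subseteq[x\upharpoonright i]$ and $h[[s_i]]=[t_i]\subseteq[y\upharpoonright i]$, which together yield $h[[x\upharpoonright i]]\subseteq[y\upharpoonright i]$ for every $i$, so that the standard neighborhood base at $x$ is carried into the neighborhood base at $y$.
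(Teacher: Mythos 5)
Your proof is correct, but it takes a different (and more self-contained) route than the paper. The paper's proof is essentially a two-line citation: it invokes van Mill's homogeneity criterion (\cite[Lemma 2.1]{vanmillh}, or \cite[Corollary 1.9.2]{vanmilli}) stating that it suffices for any two points of $X$ to have arbitrarily small homeomorphic neighborhoods, and this is immediate because each $h_q$ is an isometry for the standard metric on $2^\omega$. You instead unwind what such a criterion would do and build the homeomorphism explicitly: the clopen partitions $\{[s_i]\}$ of $2^\omega\setminus\{x\}$ and $\{[t_i]\}$ of $2^\omega\setminus\{y\}$, the translations $q_i\in Q$ carrying $[s_i]$ onto $[t_i]$, and the continuity check at $x$ via $h[[x\upharpoonright i]]\subseteq[y\upharpoonright i]$ are all correct (and the degenerate case $x=y$ gives $h=\id$, so nothing breaks there). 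What your argument buys is strictly more than the lemma asserts: you produce $h\in\HH(2^\omega)$ with $h[X]=X$ and $h(x)=y$, i.e., you show that any $X$ with $\VV[X]=X$ is \emph{relatively} homogeneous in its original embedding, not merely homogeneous. This is not in tension with Corollary \ref{hnotrh}, since that corollary concerns the re-embedded copy $X^\ast$ living in the split-points space $Z^\ast$, and relative homogeneity is a property of the embedding rather than of the homeomorphism type; but it is worth being aware that the extra strength you prove is exactly what the re-embedding is designed to destroy. The trade-off is simply length versus self-containedness: the paper's citation is shorter, while your construction avoids any appeal to the literature and makes the role of the group $\VV$ completely transparent.
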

\begin{proof}
Fix the usual metric $\di$ on $2^\omega$ defined by $\di(x,y)=\sum_{n\in\omega}(|x_n-y_n|/2^n)$. By \cite[Lemma 2.1]{vanmillh} (see also \cite[Corollary 1.9.2]{vanmilli}), it will be enough to show that if $x,y\in X$ then $x$ and $y$ have arbitrarily small homeomorphic neighborhoods. This is straightforward, using the fact that each $h_q$ is an isometry with respect to $\di$.
\end{proof}

The following classical result, which is a well-known tool for ``killing'' homeomorphisms (see \cite{vanmills} for other applications), will be the key to obtaining rigid spaces. For a proof of Lemma \ref{lav},
see \cite[Theorem 3.9 and Exercise 3.10]{kechris}.

\begin{lemma}[Lavrentiev]\label{lav}
Let $f:W\longrightarrow W$ be a homeomorphism, where $W$ is a subspace of some Polish space $Z$. Then there exists a $\Gd$ subset $T$ of $Z$ and a homeomorphism $g:T\longrightarrow T$ such that $f\subseteq g$.
\end{lemma}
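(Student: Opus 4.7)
The plan is to invoke the asymmetric Lavrentiev extension theorem for continuous maps (the main technical tool) and then symmetrize to obtain a self-homeomorphism. For the asymmetric theorem I would use the oscillation argument: fixing a compatible complete metric on $Z$, a continuous map $\phi$ from a subspace of $Z$ into $Z$ has oscillation
\[
\omega_\phi(x) \;=\; \inf\{\mathrm{diam}(\phi[U\cap\dom(\phi)]):U\ni x\text{ open in }Z\},
\]
which is upper semicontinuous, so $\{x:\omega_\phi(x)=0\}$ is a $\Gd$ subset of $Z$ containing the original domain; a Cauchy-type limit argument then extends $\phi$ continuously to this $\Gd$ set.

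Applied to $f:W\to Z$ and to $f^{-1}:W\to Z$, this yields $\Gd$ subsets $A,B\subseteq Z$ with $W\subseteq A\cap B$ and continuous extensions $\widetilde f:A\to Z$ and $\widetilde g:B\to Z$ of $f$ and $f^{-1}$. To cut down to the part where these extensions actually invert one another, I would let
\[
C \;=\; \{x\in A:\widetilde f(x)\in B\ \text{and}\ \widetilde g(\widetilde f(x))=x\}
\]
and define $D$ symmetrically with the roles of $\widetilde f,\widetilde g$ swapped. Each of $C,D$ is $\Gd$ in $Z$ (it is an intersection of preimages under continuous maps with the preimage of the diagonal of $Z\times Z$ under $x\mapsto(\widetilde g(\widetilde f(x)),x)$), each contains $W$ because $f$ genuinely inverts $f^{-1}$ on $W$, and $\widetilde f\upharpoonright C$ is then a homeomorphism onto $D$ with inverse $\widetilde g\upharpoonright D$.

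Finally, to force domain and range to coincide so as to obtain a self-homeomorphism, I would recursively define $T_0=C\cap D$ and $T_{n+1}=T_n\cap\widetilde f^{-1}[T_n]\cap\widetilde g^{-1}[T_n]$, and set $T=\bigcap_{n\in\omega}T_n$. Each $T_n$ is $\Gd$ in $Z$ and contains $W$ (since $f$ and $f^{-1}$ map $W$ into $W$), so $T$ is $\Gd$ and contains $W$; by construction both $\widetilde f$ and $\widetilde g$ map $T$ into $T$; and since they are mutual inverses on $C\cap D\supseteq T$, the restriction $g=\widetilde f\upharpoonright T$ is a self-homeomorphism of $T$ extending $f$, as required.

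The hard part is precisely this final cutting-down step, not the initial extension: the asymmetric Lavrentiev theorem produces $\widetilde f:C\to D$ with $C\neq D$ in general, and one must carefully verify that iterating backwards and forwards under $\widetilde f$ and $\widetilde g$ simultaneously preserves the $\Gd$ property at each stage while still keeping $W$ inside the intersection.
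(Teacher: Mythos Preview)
The paper does not supply its own proof of this lemma; it simply cites \cite[Theorem 3.9 and Exercise 3.10]{kechris}. Your argument is correct and is exactly the standard proof one recovers by following that citation: the oscillation extension is Kuratowski's theorem (the engine behind Kechris's Theorem~3.9), the passage to the sets $C$ and $D$ on which the two extensions are mutual inverses is the content of Theorem~3.9 itself, and the recursive intersection $T=\bigcap_n T_n$ forcing $\dom=\ran$ is precisely Exercise~3.10. One small remark: after noting $\widetilde f[C]\subseteq D$ and $\widetilde g[D]\subseteq C$, the equality $\widetilde f[C]=D$ follows because for $y\in D$ one has $\widetilde g(y)\in C$ with $\widetilde f(\widetilde g(y))=y$; you implicitly use this, and it is worth stating. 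Otherwise there is nothing to add --- your write-up is a faithful expansion of what the paper leaves to the reference.
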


Our reference for set theory will be \cite{kunen}. We will denote by $\MAsigma$ the statement that Martin's Axiom for $\sigma$-centered posets holds. Recall that $\add$ is the minimum cardinal $\kappa$ such that there exists a collection $\CC$ of size $\kappa$ consisting of meager subsets of $2^\omega$ such that $\bigcup\CC$ is non-meager in $2^\omega$.\footnote{\,In \cite[Definitions III.1.2 and III.1.6]{kunen}, Kunen uses $\RRR$ instead of $2^\omega$. Since $\RRR\setminus\QQQ\approx\omega^\omega\approx 2^\omega\setminus Q$, it is easy to see that this makes no difference.} It is clear that $\omega_1\leq\add\leq\cccc$. Furthermore, it is well-known that $\MAsigma$ implies $\add=\cccc$ (see for example \cite[Lemmas III.3.22, III.3.26 and III.1.25]{kunen}).

The following lemma, which first appeared as \cite[Lemma 3.2]{baldwinbeaudoin}, will be the key to obtaining our countable dense homogeneous example. See \cite[Corollary 2.2]{medini} for a simpler version of the proof. Given an infinite cardinal $\lambda\leq\cccc$ and a subset $D$ of $2^\omega$, we will say that $D$ is \emph{$\lambda$-dense} if $|D\cap U|=\lambda$ for every non-empty open subset $U$ of $2^\omega$. 
\begin{lemma}[Baldwin, Beaudoin]\label{bb}
Assume $\MAsigma$. Let $\kappa<\cccc$ be a cardinal. Suppose that $A_\alpha$ and $B_\alpha$ are $\lambda_\alpha$-dense subsets of $2^\omega$ for $\alpha < \kappa$, where each $\lambda_\alpha<\cccc$ is an infinite cardinal. Also assume that $A_\alpha\cap A_\beta=\varnothing$ and $B_\alpha\cap B_\beta=\varnothing$ whenever $\alpha\neq\beta$. Then there exists $f\in\HH(2^\omega)$ such that $f[A_\alpha]=B_\alpha$ for every $\alpha < \kappa$.
\end{lemma}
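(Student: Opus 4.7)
The plan is to prove Lemma \ref{bb} by a forcing argument using $\MAsigma$. One constructs a $\sigma$-centered partial order $\mathbb{P}$ whose conditions are finite combinatorial approximations to the desired $f\in\HH(2^\omega)$, exhibits fewer than $\cccc$ dense subsets of $\mathbb{P}$ encoding the required properties of $f$, and extracts $f$ from a filter meeting all of them.

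The natural choice of $\mathbb{P}$ has conditions $p = (\sigma_p, F_p)$, where $\sigma_p = (\UU_p, \VV_p, \varphi_p)$ is a finite \emph{clopen bijection} of $2^\omega$ (two finite maximal antichains in $2^{<\omega}$ of equal cardinality together with a bijection $\varphi_p : \UU_p \to \VV_p$) and $F_p$ is a finite injective partial function from $\bigcup_{\alpha<\kappa}A_\alpha$ to $\bigcup_{\alpha<\kappa}B_\alpha$ that respects the $\alpha$-indexing and is coherent with $\sigma_p$, in the sense that whenever $(x,y)\in F_p$ and $x\in [\tau]$ with $\tau\in \UU_p$, one has $y\in [\varphi_p(\tau)]$. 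Extension $q\leq p$ demands that $\sigma_q$ coherently refine $\sigma_p$ and that $F_q\supseteq F_p$. The relevant dense subsets of $\mathbb{P}$ are: for each $n\in\omega$, the set $E_n$ of conditions whose antichains consist only of nodes of length at least $n$ (forcing the generic to give a well-defined homeomorphism of $2^\omega$); for each $\alpha<\kappa$ and each $x\in A_\alpha$, the set $D^A_x$ of conditions with $x\in\dom F_p$ (forcing $f(x)\in B_\alpha$); and for each $\alpha<\kappa$ and each $y\in B_\alpha$, the symmetric set $D^B_y$ of conditions with $y\in\ran F_p$. Density of $D^A_x$ follows by refining $\sigma_p$ around $x$ to obtain a small cell $[\tau]$ of $\UU_q$ containing $x$, then choosing $y\in B_\alpha\cap [\varphi_q(\tau)]\setminus\ran F_p$ (which exists since $B_\alpha$ is $\lambda_\alpha$-dense and $\ran F_p$ is finite) and extending $F_p$ by $(x,y)$; density of $D^B_y$ is symmetric. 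The total count is $\omega + 2\sum_{\alpha<\kappa}\lambda_\alpha < \cccc$, using the regularity of $\cccc$ under $\MAsigma$.

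The main technical obstacle is verifying that $\mathbb{P}$ is $\sigma$-centered. One first partitions $\mathbb{P}$ into countably many pieces by the clopen skeleton $\sigma_p$ (of which there are only countably many), and then refines each piece further by a finite combinatorial ``type'' of $F_p$ that records, for each matched cell $(\tau,\varphi(\tau))$ of $\sigma_p$, the number of pairs of $F_p$ inside it together with their $\alpha$-labels. The delicate step is then showing that any finite family of conditions sharing such a type admits a common extension in $\mathbb{P}$: collisions among the $F$-parts within a single cell must be resolved by passing to a common coherent refinement of the skeleton that separates conflicting pairs into distinct subcells, using the pairwise disjointness of the $A_\alpha$'s and of the $B_\alpha$'s together with the fact that distinct points of $2^\omega$ can always be separated by clopens. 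Once $\sigma$-centeredness is in hand, applying $\MAsigma$ yields a filter $G$ meeting every dense set above; one reads off $f$ by setting $f(x)$ to be the unique element of $\bigcap_{p\in G}[\varphi_p(\tau_p(x))]$, where $\tau_p(x)\in \UU_p$ is such that $x\in [\tau_p(x)]$, and standard verifications show that $f\in\HH(2^\omega)$ and $f[A_\alpha]=B_\alpha$ for every $\alpha<\kappa$.
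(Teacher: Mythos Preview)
The paper does not itself prove Lemma~\ref{bb}; it is quoted from \cite{baldwinbeaudoin}, with a pointer to a streamlined argument in \cite[Corollary~2.2]{medini}. So there is nothing in the paper to compare your attempt against directly.

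Your overall plan---build a $\sigma$-centered poset of finite approximations and apply $\MAsigma$---is indeed how the result is proved, but the poset you describe is \emph{not} $\sigma$-centered; in fact it fails even the countable chain condition as soon as some $\lambda_\alpha$ is uncountable. Fix a skeleton $\sigma$, a cell pair $(\tau,\varphi(\tau))$, and a point $x\in A_\alpha\cap[\tau]$. For each $y\in B_\alpha\cap[\varphi(\tau)]$ the pair $p_y=(\sigma,\{(x,y)\})$ is a legitimate condition in your $\mathbb{P}$, and $p_y$, $p_{y'}$ are incompatible whenever $y\neq y'$, because any common extension $q$ would need $F_q\supseteq\{(x,y),(x,y')\}$, which is not a function. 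That is an antichain of size $\lambda_\alpha$. Your proposed decomposition into ``types'' does not rescue this: once the type records $\alpha$-labels there are $\kappa$ many types rather than countably many when $\kappa>\omega$, and in any case two conditions of identical type (same $\sigma$, a single pair in the same cell, same label $\alpha$) can still be incompatible for exactly the reason just given---they may share the source point $x$ but disagree on the target.

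The repair is to set up the conditions so that they do not rigidly bind a specific target point to each source point via $F_p\subseteq F_q$; the correct formulation and the verification that the resulting poset is $\sigma$-centered are carried out in \cite{baldwinbeaudoin} and, in simplified form, in \cite{medini}.
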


\section{The general method}

The following theorem gives a general method for embedding suitable zero-dimensional spaces into $2^\omega$, so that their complement will be non-trivial and rigid. The strategy of its proof is to combine Lemma \ref{lav} with the idea of ``splitting points'' in a linearly ordered space, which dates back to the classical double arrow space of Alexandroff and Urysohn (see \cite{alexandroffurysohn}).

\begin{theorem}\label{general}
Assume that the following requirements are satisfied.
\begin{itemize}
\item $X$ is a subspace of $2^\omega$.
\item $Y=2^\omega\setminus X$.
\item $\HH$ is a subgroup of $\HH(2^\omega)$.
\item $D$ is a countable dense subset of $2^\omega$ such that $D\subseteq Y$ and $D\cap Q=\varnothing$.
\item $\GG$ is the collection of all homeomorphisms $g$ such that $\dom(g)$ and $\ran(g)$ are $\Gd$ subspaces of $2^\omega$.
\end{itemize}
Furthermore, assume that the following conditions hold.
\begin{enumerate}
\item\label{preserve} $\HH[X]=X$.
\item\label{pushaway} If $h\in\HH\setminus\{\id\}$ then $h[D]\cap D=\varnothing$.
\item\label{kill} If $g\in\GG$ and $|\{x\in\dom(g):g(x)\notin\Ss(x)\}|=\cccc$ for every subgroup $\Ss$ of $\HH$ such that $|\Ss|<\add$ then there exists $z\in\dom(g)\cap X$ such that $g(z)\notin X$.
\end{enumerate}
Then there exists a subspace $X^\ast\approx X$ of $2^\omega$ such that  $2^\omega\setminus X^\ast$ is dense in $2^\omega$ and rigid.
\end{theorem}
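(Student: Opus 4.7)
The strategy is to construct $X^\ast$ by ``splitting'' each point of $D$ into two, following the classical double-arrow construction of Alexandroff and Urysohn, and then to derive rigidity of the complement via Lemma \ref{lav} and condition (\ref{kill}). Concretely, I would build a compact metrizable zero-dimensional crowded space $Z$ together with a continuous surjection $\pi\colon Z\to 2^\omega$ whose fibers are singletons off $D$ and two-element sets $\{d^-,d^+\}$ over each $d\in D$, designed so that $d^-$ and $d^+$ are topologically distinguishable inside $\pi^{-1}(Y)$. By Brouwer's theorem, $Z\approx 2^\omega$, so identifying $Z$ with $2^\omega$ and setting $X^\ast=\pi^{-1}(X)$, saturation (which holds because $X\cap D=\varnothing$) together with Lemma \ref{pi} makes $\pi\upharpoonright X^\ast\colon X^\ast\to X$ a homeomorphism, whence $X^\ast\approx X$; and $2^\omega\setminus X^\ast=\pi^{-1}(Y)$ contains the dense set $\pi^{-1}(D)$ and is therefore dense in $2^\omega$.

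For the rigidity part, let $f$ be a self-homeomorphism of $2^\omega\setminus X^\ast$ and apply Lemma \ref{lav} to extend it to $g\colon T\to T$ with $T\supseteq\pi^{-1}(Y)$ a $\Gd$ subset of $Z\approx 2^\omega$. Then $Z\setminus T\subseteq X^\ast$ and in particular $\pi^{-1}(D)\subseteq T$, so $T$ is saturated under $\pi$. The topological distinguishability of $d^-$ and $d^+$ in $\pi^{-1}(Y)$ forces $g$ to map each two-point fiber onto another two-point fiber, so the descent $\bar g(\pi(z)):=\pi(g(z))$ defines a homeomorphism of $\pi[T]$, which is a $\Gd$ subset of $2^\omega$ by Lemma \ref{pi}; thus $\bar g\in\GG$. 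Moreover, since $g$ is a self-bijection of $T$ preserving $\pi^{-1}(Y)$, it preserves $X^\ast\cap T$ as well, and therefore $\bar g$ maps $X\cap\pi[T]$ into $X$. This contradicts the conclusion of condition (\ref{kill}), so its hypothesis must fail for $\bar g$: there exists a subgroup $\Ss\leq\HH$ with $|\Ss|<\add$ and $|\{x\in\dom(\bar g):\bar g(x)\notin\Ss(x)\}|<\cccc$.

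For each $h\in\Ss$ the agreement set $A_h:=\{x\in\dom(\bar g):\bar g(x)=h(x)\}$ is closed in the Polish space $\dom(\bar g)$, and $\bigcup_{h\in\Ss}A_h$ covers all but fewer than $\cccc$ points, so it is non-meager in $\dom(\bar g)$; since a union of fewer than $\add$ meager sets is meager, some $A_h$ must have non-empty interior $U$, on which $\bar g=h$. Fiber preservation gives $\bar g[D\cap U]\subseteq D$, and $D\cap U\neq\varnothing$ yields $h[D]\cap D\neq\varnothing$, so condition (\ref{pushaway}) forces $h=\id$, i.e., $\bar g=\id$ on $U$. A standard iteration on the complement of $\overline U$ (which is open in $\dom(\bar g)$ and hence either empty or an uncountable Polish space to which the same argument applies) gives $\bar g=\id$ throughout $\dom(\bar g)$; combined with the asymmetric design of the splitting, which rules out the swap $d^-\leftrightarrow d^+$, this yields $g=\id$ and hence $f=\id$. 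The main obstacle is carrying out the splitting so as to ensure simultaneously that $Z\approx 2^\omega$, that $d^-$ and $d^+$ are topologically distinguishable in $\pi^{-1}(Y)$, and that the global asymmetry needed to rule out fiber swaps is in place; condition (\ref{preserve}) plausibly plays a role here by making the construction compatible with the $\HH$-action on $X$.
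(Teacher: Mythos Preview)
Your overall strategy---split the points of $D$, take $X^\ast$ as the saturated preimage, and combine Lavrentiev with condition~(\ref{kill})---matches the paper, and your treatment of $X^\ast\approx X$ and of the density of the complement is fine. The genuine gap is in the rigidity argument. You apply Lemma~\ref{lav} inside the split space $Z$ and then try to \emph{descend} the extension $g$ through $\pi$ to obtain $\bar g\in\GG$. This requires $g$ (indeed, already $f$) to send each two-point fiber $\{d^-,d^+\}$ onto another such fiber, which you justify only by an unspecified ``topological distinguishability'' and a hoped-for ``asymmetric design''. In the standard double-arrow construction this simply fails: the points of $Z^\ast$ with an immediate neighbour are not just the $d^\pm$ but also the points of $Q$, and since the hypotheses say nothing about $Q\cap Y$, a self-homeomorphism of $Y^\ast$ may well carry some $d^-$ to a point of $Q\cap Y$. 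Even if a more exotic splitting could be engineered, the Lavrentiev extension $g$ is produced abstractly and there is no reason it should respect fibers on $T\setminus\pi^{-1}(Y)$. Your closing sentence concedes exactly this obstacle without resolving it.

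The paper sidesteps the problem entirely. Given a self-homeomorphism $f^\ast$ of $Y^\ast$, it first strips off the countable set $B^\ast=\bigcup_{k\in\ZZZ}(f^\ast)^k[D^\ast]$; on the remainder $W=Y^\ast\setminus B^\ast=Y\setminus\pi[B^\ast]$ the subspace topologies inherited from $Z^\ast$ and from the \emph{original} $2^\omega$ coincide, so $f=f^\ast\upharpoonright W$ is already a self-homeomorphism of a subset of $2^\omega$, and Lemma~\ref{lav} applied there yields $g\in\GG$ directly---no descent, no fiber bookkeeping. The split structure re-enters only to exclude $h\neq\id$: if $g$ agrees with such an $h$ on an interval containing some $d\in D$, one chooses sequences in $W$ approaching $d$ from the left and right; in $Z^\ast$ these converge to the distinct points $d^-$ and $d^+$, yet $f^\ast$ sends both to sequences converging to the single point $h(d)\notin D$ (this is where condition~(\ref{pushaway}) enters), contradicting injectivity of $f^\ast$. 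Once $g=\id$ on $T$, density of $W$ in $Y^\ast$ and continuity give $f^\ast=\id$ outright, so no separate ``swap'' argument is needed. Finally, condition~(\ref{preserve}) is not used to make the splitting $\HH$-equivariant as you suggest; together with~(\ref{pushaway}) it is used at the outset to show that $Y$ meets every nonempty open set uncountably, which is what allows one to find those sequences inside $W$.
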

\begin{proof}
Let $Z=2^\omega$. First, we will show that $Y\cap U$ is uncountable for every non-empty open subset $U$ of $Z$. Condition (\ref{pushaway}) implies that $h[D]\cap g[D]=\varnothing$ whenever $h,g\in\HH$ and $h\neq g$. In particular, since $\HH[D]\subseteq\HH[Y]=Y$ by condition (\ref{preserve}), the desired conclusion holds if $\HH$ is uncountable. Now assume that $\HH$ is countable. We will actually show that $X$ is a Bernstein set.\footnote{\,Recall that a subset $X$ of $2^\omega$ is a \emph{Bernstein set} if $X\cap K\neq\varnothing$ and $(2^\omega\setminus X)\cap K\neq\varnothing$ for every perfect subset $K$ of $2^\omega$. Notice that the complement of a Bernstein set is also a Bernstein set. Since $2^\omega\approx 2^\omega\times 2^\omega$, every Bernstein set is $\cccc$-dense in $2^\omega$.} Fix a nowhere dense perfect subset $K$ of $Z$. Notice that $Z\setminus\HH[K]$ is a comeager subset of $Z$, hence it contains a perfect subset $K'$. Fix a homeomorphism $g:K\longrightarrow K'$, and notice that $g\in\GG$. Since $|\{x\in\dom(g):g(x)\notin\HH(x)\}|=|K|=\cccc$ and $|\HH|=\omega<\add$, condition (\ref{kill}) shows that $X\cap K\neq\varnothing$. The same reasoning, applied to $g^{-1}$, shows that $Y\cap K\neq\varnothing$.

Let $D^\ast=\{d^-:d\in D\}\cup\{d^+:d\in D\}$, where we use the notation $d^-=\langle d,-1\rangle$ and $d^+=\langle d,1\rangle$ for $d\in D$. Define
$$
Z^\ast=(Z\setminus D)\cup D^\ast.
$$
Consider the function $\pi:Z^\ast\longrightarrow Z$ defined by the following two conditions.
\begin{itemize}
\item $\pi(d^-)=\pi(d^+)=d$ for every $d\in D$.
\item $\pi\upharpoonright (Z\setminus D)=\id$.
\end{itemize}
Let $\prec$ denote the linear ordering on $Z^\ast$ defined by the following two conditions.
\begin{itemize}
\item $d^-$ is the immediate predecessor of $d^+$ for every $d\in D$.
\item $x\prec y$ whenever $x,y\in Z^\ast$ and $\pi(x)<\pi(y)$, where $<$ denotes the usual lexicographic order on $Z$.
\end{itemize}
Consider the order topology induced by $\prec$ on $Z^\ast$. Since $D\cap Q=\varnothing$, it is easy to check that $Z^\ast$ is a compact crowded zero-dimensional space with no isolated points. Therefore $Z^\ast\approx 2^\omega$. Furthermore, it is easy to check that $\pi$ is a continuous surjection. Let $X^\ast$ be the subspace of $Z^\ast$ whose underlying set is $X$, and notice that $X^\ast$ is saturated with respect to $\pi$. Observe that $\pi\upharpoonright X^\ast:X^\ast\longrightarrow X$ is a closed continuous surjection by Lemma \ref{pi}. Since it is also injective (in fact, it is the identity), it follows that $X^\ast\approx X$.

Let $Y^\ast= Z^\ast\setminus X^\ast$, and notice that $Y^\ast\supseteq D^\ast$ is dense in $Z^\ast$. Assume that $f^\ast:Y^\ast\longrightarrow Y^\ast$ is a homeomorphism. Let $B^\ast=\bigcup_{k\in\ZZZ}(f^\ast)^k[D^\ast]$ and $B=\pi[B^\ast]$. Let $W=Y^\ast\setminus B^\ast=Y\setminus B\subseteq Z$, and notice that $f=f^\ast\upharpoonright W:W\longrightarrow W$ is a homeomorphism. By Lemma \ref{lav}, there exists a $\Gd$ subspace $T$ of $Z$ and a homeomorphism $g:T\longrightarrow T$ such that $f\subseteq g$. Notice that $g\in\GG$. Furthermore, since $T'=T\setminus\bigcup_{k\in\ZZZ}g^k[B]\supseteq W$ is still a $\Gd$ and $g'=g\upharpoonright T':T'\longrightarrow T'$ is still a homeomorphism, we can assume without loss of generality that $T\cap B=\varnothing$.

First assume that $|\{x\in T:g(x)\notin\Ss(x)\}|<\cccc$ for some subgroup $\Ss$ of $\HH$ such that $|\Ss|<\add$, and fix such a subgroup. Notice that $T$ is a crowded Polish space because it is a dense $\Gd$ subset of $Z$. Let $T_h=\{x\in T:g(x)=h(x)\}$ for $h\in\Ss$, and notice that each $T_h$ is closed in $T$. Since $|\Ss|<\add$, it follows that at least one $T_h$ has non-empty interior in $T$. Assume, in order to get a contradiction, that $T_h$ has non-empty interior for some $h\in\Ss\setminus\{\id\}$, and fix such an $h$. Fix $a,b\in Z$ such that $a<b$ and $\varnothing\neq (a,b)\cap T\subseteq T_h$.

Fix $d\in D\cap (a,b)$. Since $D\cap Q=\varnothing$ and $Y\cap U$ is uncountable for every non-empty open subset $U$ of $Z$, it is possible to fix a sequence $\langle a_n:n\in\omega\rangle$ consisting of elements of $(a,d)\cap W=(a,d)\cap (Y\setminus B)$ and a sequence $\langle b_n:n\in\omega\rangle$ consisting of elements of $(d,b)\cap W=(d,b)\cap (Y\setminus B)$ such that $a_n\to d$ and $b_n\to d$ in $Z$. Observe that $a_n\to d^-$ and $b_n\to d^+$ in $Y^\ast$. In particular, since $f^\ast$ is a homeomorphism, the sequences $\langle f^\ast(a_n):n\in\omega\rangle$ and $\langle f^\ast(b_n):n\in\omega\rangle$ should converge to different limits in $Y^\ast$, hence in $Z^\ast$.

Notice that $h(a_n)\to h(d)$ and $h(b_n)\to h(d)$ in $Z$ by the continuity of $h$. Since $h\neq\id$, condition (\ref{pushaway}) guarantees that $h(d)\notin D$. Furthermore, the fact that each $a_n\in (a,b)\cap W\subseteq (a,b)\cap T\subseteq T_h$ implies that
$h(a_n)=g(a_n)=f(a_n)=f^\ast(a_n)\notin D$. Therefore $f^\ast(a_n)=h(a_n)\to h(d)$ in $Z^\ast$ as well. Using a similar argument, one sees that $f^\ast(b_n)=h(b_n)\to h(d)$ in $Z^\ast$, which is a contradiction. In conclusion, $T_h$ has non-empty interior if and only if $h=\id$. As one can easily check, this implies that $T_\id$ is dense in $T$. Therefore, the function $g$ is the identity on $T$, which implies that $f^\ast$ is the identity on $Y^\ast$. This shows that $Y^\ast$ is rigid.

Now assume that $|\{x\in T:g(x)\notin\Ss(x)\}|=\cccc$ for every subgroup $\Ss$ of $\HH$ such that $|\Ss|<\add$. Then, by condition (\ref{kill}), we can fix $z\in T\cap X$ such that $g(z)\notin X$. Since $T\cap B=\varnothing$, it is clear that $T=\dom(g)=\ran(g)$ is the disjoint union of $T\cap X$ and $W$. It follows that $g(z)\in W$. This is a contradiction, because it implies that $z=g^{-1}(g(z))=f^{-1}(g(z))\in W$.
\end{proof}

\section{The homogeneous example}

\begin{theorem}\label{homogeneous}
There exists a homogeneous subspace of $2^\omega$ whose complement is dense in $2^\omega$ and rigid.
\end{theorem}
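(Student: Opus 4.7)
The plan is to apply Theorem \ref{general} with $\HH=\VV$; then the resulting $X^\ast\approx X$ will be homogeneous by Lemma \ref{jan}, since $\VV[X]=X$ will hold by construction. Because $\VV$ is countable and $\omega_1\leq\add$, every subgroup $\Ss$ of $\VV$ satisfies $|\Ss|<\add$, so condition (\ref{kill}) collapses to a single requirement: whenever $g\in\GG$ satisfies $|\{x\in\dom(g):g(x)\notin\VV(x)\}|=\cccc$, there must exist $z\in\dom(g)\cap X$ with $g(z)\notin X$. This I would arrange by a transfinite recursion of length $\cccc$ that treats each such $g$ once.

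First I would build the countable dense set $D$. Fixing an enumeration $\{U_n:n\in\omega\}$ of a countable base for $2^\omega$, I recursively choose $d_n\in U_n\setminus(Q\cup\bigcup_{i<n}\VV(d_i))$; this choice succeeds because the set being avoided is countable while $U_n$ is uncountable. Setting $D=\{d_n:n\in\omega\}$ yields a countable dense subset of $2^\omega$ with $D\cap Q=\varnothing$ and each $\VV$-orbit meeting $D$ in at most one point. The latter property is exactly condition (\ref{pushaway}), since $h_q[D]\cap D\neq\varnothing$ is equivalent to two points of $D$ lying in a common $\VV$-orbit.

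Next, since $|\GG|\leq\cccc$, I would enumerate the $g$'s satisfying the hypothesis of (\ref{kill}) as $\{g_\alpha:\alpha<\cccc\}$ and recursively pick $z_\alpha\in\dom(g_\alpha)$. At stage $\alpha$, writing $X_\alpha=\VV[\{z_\beta:\beta<\alpha\}]$ and $Y_\alpha=\VV[D\cup\{g_\beta(z_\beta):\beta<\alpha\}]$ (both of size $<\cccc$), I would require: (a) $g_\alpha(z_\alpha)\notin\VV(z_\alpha)$, (b) $z_\alpha\notin Y_\alpha$, and (c) $g_\alpha(z_\alpha)\notin X_\alpha\cup Y_\alpha$. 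The set satisfying (a) has cardinality $\cccc$ by hypothesis on $g_\alpha$, while (b) and (c) together exclude fewer than $\cccc$ candidates (using that $g_\alpha$ is injective on its domain). Setting $X=\bigcup_{\alpha<\cccc}X_\alpha$, the invariance $\VV[X]=X$ is immediate, $D$ remains disjoint from $X$ (so $D\subseteq Y$), and each $g_\alpha$ is killed by $z_\alpha$.

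I do not anticipate any serious obstacle: the homogeneity is handed to us by Lemma \ref{jan}, the rigid-complement machinery lives entirely inside Theorem \ref{general}, and the recursion is driven by the cardinal inequality $|X_\alpha|+|Y_\alpha|<\cccc$. The only point requiring attention is keeping the $\VV$-orbits of the chosen $z_\alpha$'s and $g_\alpha(z_\alpha)$'s pairwise disjoint from one another and from $\VV[D]$ throughout the induction, which is precisely what clauses (b) and (c) enforce.
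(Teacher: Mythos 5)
Your proposal is correct and follows essentially the same route as the paper: apply Theorem \ref{general} with $\HH=\VV$, build $D$ by avoiding $Q$ and the $\VV$-orbits of previously chosen points, and run a transfinite recursion of length $\cccc$ that, for each $g\in\GG$ moving $\cccc$ many points off their $\VV$-orbits, places a witness $z$ into the $\VV$-invariant set $X$ while keeping $g(z)$, $D$, and all earlier witnesses' images out of it. Your bookkeeping via clauses (a)--(c), the injectivity of $g_\alpha$ for the cardinality count, and the observation that condition (\ref{kill}) collapses to the single case $\Ss=\VV$ all match the paper's argument.
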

\begin{proof}
Our plan is to apply Theorem \ref{general} with $\HH=\VV$. Let $\GG=\{g_\alpha:\alpha\in\cccc\}$ be an enumeration. Using the fact that $\VV$ is countable, it is easy to construct a countable dense subset $D$ of $2^\omega$ such that $D\cap Q=\varnothing$ and $h[D]\cap D=\varnothing$ for every $h\in\VV\setminus\{\id\}$.

Using transfinite recursion, we will construct increasing sequences $\langle X_\alpha:\alpha\in\cccc\rangle$ and $\langle Y_\alpha:\alpha\in\cccc\rangle$ consisting of subsets of $2^\omega$ so that the following conditions are satisfied for every $\alpha\in\cccc$.
\begin{enumerate}
\item[(I)] $|X_\alpha|<\cccc$ and $|Y_\alpha|<\cccc$.
\item[(II)] $X_\alpha\cap Y_\alpha=\varnothing$.
\item[(III)] $\VV[X_\alpha]=X_\alpha$.
\item[(IV)] If $|\{x\in\dom(g_\alpha):g_\alpha(x)\notin\VV(x)\}|=\cccc$ then there exists $z\in\dom(g_\alpha)\cap X_{\alpha+1}$ such that $g(z)\in Y_{\alpha+1}$.
\end{enumerate}
Start by setting $X_0=\varnothing$ and $Y_0=D$. Take unions at limit stages. At a successor stage $\alpha+1$, suppose that $X_\alpha$ and $Y_\alpha$ have already been constructed. Assume that $|\{x\in\dom(g_\alpha):g_\alpha(x)\notin\VV(x)\}|=\cccc$. Then, it is possible to fix
$$
z\in\dom(g_\alpha)\setminus(\VV[Y_\alpha]\cup g_\alpha^{-1}[X_\alpha])
$$
such that $g_\alpha(z)\notin\VV(z)$. Set $X_{\alpha+1}=\VV[X_\alpha\cup\{z\}]$ and $Y_{\alpha+1}=Y_{\alpha}\cup\{g_\alpha(z)\}$. Conclude the construction by setting $X=\bigcup_{\alpha\in\cccc}X_\alpha$.

Since $\VV[X]=X$ by condition (III), it follows from Lemma \ref{jan} that $X$ is homogeneous. It is clear that conditions (\ref{preserve}) and (\ref{pushaway}) of Theorem \ref{general} are satisfied. To see that condition (\ref{kill}) holds, assume that $g\in\GG$ and $|\{x\in\dom(g):g(x)\notin\Ss(x)\}|=\cccc$ for every subgroup $\Ss$ of $\VV$ such that $|\Ss|<\add$. Since $\VV$ is countable, this trivially implies that $|\{x\in\dom(g_\alpha):g_\alpha(x)\notin\VV(x)\}|=\cccc$, where $\alpha\in\cccc$ is such that $g_\alpha=g$. It follows from conditions (IV) and (II) that there exists $z\in\dom(g)\cap X$ such that $g(z)\notin X$.
\end{proof}

\begin{corollary}\label{hnotrh}
There exists a homogeneous subspace of $2^\omega$ that is not relatively homogeneous.
\end{corollary}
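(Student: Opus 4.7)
The plan is to take the space $X^\ast$ produced by Theorem \ref{homogeneous} and argue that its own rigidity-of-complement is what prevents it from being relatively homogeneous. More precisely, let $X^\ast\subseteq 2^\omega$ be homogeneous with $Y^\ast=2^\omega\setminus X^\ast$ dense in $2^\omega$ and rigid, as given by Theorem \ref{homogeneous}. I claim $X^\ast$ is not relatively homogeneous.

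The key observation is that any $h\in\HH(2^\omega)$ with $h[X^\ast]=X^\ast$ automatically satisfies $h[Y^\ast]=Y^\ast$, so $h\upharpoonright Y^\ast$ is a homeomorphism of $Y^\ast$ onto itself. Rigidity of $Y^\ast$ then forces $h\upharpoonright Y^\ast=\id$, and density of $Y^\ast$ in $2^\omega$ together with continuity of $h$ upgrades this to $h=\id$ on all of $2^\omega$. Consequently, if $x,y\in X^\ast$ with $h(x)=y$ for some such $h$, then $x=y$. Thus relative homogeneity of $X^\ast$ would collapse to the identity on points, contradicting relative homogeneity as soon as $X^\ast$ has at least two distinct points.

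The only thing left to check, therefore, is that $|X^\ast|\geq 2$. This is not really an obstacle: inspecting the proof of Theorem \ref{general} with $\HH=\VV$ (which is countable), one sees that $X$ is shown to be a Bernstein subset of $2^\omega$, hence $|X|=\cccc$, and since $X^\ast\approx X$ we get $|X^\ast|=\cccc$ as well. So the conclusion follows immediately. The entire argument is essentially one paragraph; the content of the corollary lies entirely in the preceding construction, and the corollary merely packages the rigidity-versus-homogeneity contrast into the language of relative homogeneity.
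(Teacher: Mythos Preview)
Your argument is correct and is exactly the intended one: the paper states the corollary without proof because it follows immediately from Theorem \ref{homogeneous} via precisely the reasoning you give (any $h\in\HH(2^\omega)$ preserving $X^\ast$ restricts to a self-homeomorphism of the dense rigid complement $Y^\ast$, hence $h=\id$, so $X^\ast$ cannot be relatively homogeneous once $|X^\ast|\geq 2$). Your verification that $|X^\ast|=\cccc$ via the Bernstein-set observation in the proof of Theorem \ref{general} is also on target.
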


\section{The countable dense homogeneous example}

\begin{theorem}\label{cdh}
Assume $\MAsigma$. Then there exists a countable dense homogeneous subspace of $2^\omega$ whose complement is dense in $2^\omega$ and rigid.
\end{theorem}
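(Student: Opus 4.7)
The plan is to apply Theorem \ref{general} in the spirit of Theorem \ref{homogeneous}, but with the group $\HH$ built up during the recursion rather than being the fixed countable $\VV$. At each successor stage $\alpha + 1$ we will use Lemma \ref{bb} (Baldwin-Beaudoin) to adjoin a new element $f_\alpha$ to $\HH$, witnessing that a prescribed pair $(A_\alpha, B_\alpha)$ of countable dense subsets of the eventual $X$ can be mapped to each other by an element of $\HH$. Since $\MAsigma$ gives $\add = \cccc$ and $\cccc^\omega = \cccc$, there is enough room at each stage to apply Lemma \ref{bb}, and enough length in the recursion to enumerate all pairs.

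More concretely, fix an enumeration $\GG = \{g_\alpha : \alpha < \cccc\}$ and an exhaustive enumeration $\{(A_\alpha, B_\alpha) : \alpha < \cccc\}$ of pairs of countable subsets of $2^\omega$, together with a countable dense $D \subseteq 2^\omega$ satisfying $D \cap Q = \varnothing$ and $h[D] \cap D = \varnothing$ for $h \in \VV \setminus \{\id\}$. Build disjoint increasing sequences $\langle X_\alpha, Y_\alpha : \alpha < \cccc\rangle$ of subsets of $2^\omega$ and an increasing sequence $\langle \HH_\alpha : \alpha < \cccc\rangle$ of subgroups of $\HH(2^\omega)$, starting with $X_0 = \varnothing$, $Y_0 = D$, $\HH_0 = \VV$, all of cardinality $< \cccc$, maintaining $\HH_\alpha[X_\alpha] = X_\alpha$, $\HH_\alpha[Y_\alpha] = Y_\alpha$, and $h[D] \cap D = \varnothing$ for every non-identity $h \in \HH_\alpha$. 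At a successor stage $\alpha + 1$, first perform the rigidity step of Theorem \ref{homogeneous}: if the hypothesis of condition (\ref{kill}) applies (which, since $|\HH_\alpha| < \cccc = \add$, reduces to $|\{x \in \dom(g_\alpha) : g_\alpha(x) \notin \HH_\alpha(x)\}| = \cccc$), pick $z \in \dom(g_\alpha)$ outside the relevant $\HH_\alpha$-orbits with $g_\alpha(z) \notin \HH_\alpha(z)$. Second, perform the CDH step: if $A_\alpha, B_\alpha$ are countable dense in $2^\omega$ and disjoint from the appropriate committed $Y$-set, apply Lemma \ref{bb} to produce $f_\alpha \in \HH(2^\omega)$ with $f_\alpha[A_\alpha] = B_\alpha$, and set $\HH_{\alpha+1} = \langle \HH_\alpha \cup \{f_\alpha\} \rangle$, $X_{\alpha+1} = \HH_{\alpha+1}[X_\alpha \cup \{z\} \cup A_\alpha \cup B_\alpha]$, and $Y_{\alpha+1} = \HH_{\alpha+1}[Y_\alpha \cup \{g_\alpha(z)\}]$. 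Take unions at limits and let $X = \bigcup X_\alpha$, $\HH = \bigcup \HH_\alpha$. Then $X$ will be countable dense homogeneous: any two countable dense subsets $A, B$ of $X$ lie inside some $X_\alpha$ by regularity of $\cccc$ under $\MAsigma$, and appear as $(A_\beta, B_\beta)$ at some stage $\beta \geq \alpha$ where the CDH step succeeds, so $f_\beta \upharpoonright X$ is the required self-homeomorphism of $X$. Applying Theorem \ref{general} to the resulting $X$ and $\HH$ then yields $X^\ast \approx X$ with $2^\omega \setminus X^\ast$ dense and rigid.

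The main obstacle is the bundled application of Lemma \ref{bb} at the CDH step: the new $f_\alpha$ must not only send $A_\alpha$ to $B_\alpha$, but must also respect the ongoing commitments, namely that no word in the generators of $\HH_{\alpha+1}$ maps a committed $X$-point to a committed $Y$-point, and that $h[D] \cap D = \varnothing$ persists for every non-identity $h \in \HH_{\alpha+1}$. Both requirements can be secured by feeding additional disjoint pairs of countable dense subsets of $2^\omega$ into Lemma \ref{bb}: a pair of the form $(D, D'_\alpha)$ to push $D$ onto a fresh countable dense set disjoint from $D \cup \HH_\alpha[D]$, and further pairs to keep the $\HH_{\alpha+1}$-orbits of committed $X$-points away from committed $Y$-points. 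The use of $\MAsigma$ is essential here, since the number of auxiliary pairs and the size of the parameters involved are both $< \cccc = \add$. The technical core of the proof is the verification that all these disjointness constraints can be met simultaneously, preserving $X_{\alpha+1} \cap Y_{\alpha+1} = \varnothing$ and the inductive hypotheses needed to eventually invoke Theorem \ref{general}.
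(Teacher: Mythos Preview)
Your outline matches the paper's: build $X$, $Y$, and $\HH$ by a length-$\cccc$ recursion, use a Baldwin--Beaudoin step to adjoin one new homeomorphism $f_\alpha$ at each successor stage, and then feed the result into Theorem~\ref{general}. The paper isolates the successor step as a separate lemma (Lemma~\ref{keylemma}), and that lemma is exactly the ``technical core'' you flag at the end. However, the hints you give for this core are not enough, and one of them is actually misleading.

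For the constraint $h[D]\cap D=\varnothing$ for all $h\in\langle\HH_\alpha\cup\{f_\alpha\}\rangle\setminus\{\id\}$: a single auxiliary pair $(D,D'_\alpha)$ only controls $f_\alpha[D]$. It says nothing about words such as $f_\alpha h_1 f_\alpha^{-1} h_2 f_\alpha$ with $h_1,h_2\in\HH_\alpha$. The paper handles this by introducing a tree $\TT$ of reduced word \emph{types} in the alphabet $\HH_\alpha^\ast\cup\{-1,1\}$, choosing a pairwise disjoint family $\{D_s:s\in\TT\}$ of countable dense sets with $D_\varnothing=D$ and $D_{\langle h\rangle^\frown s}=h[D_s]$, and then using Lemma~\ref{bb} to force $f_\alpha[D_s]=D_{\langle 1\rangle^\frown s}$ and $f_\alpha^{-1}[D_s]=D_{\langle -1\rangle^\frown s}$ for all relevant $s$. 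An induction on word length then gives $w[D]=D_s$ for every $w$ of type $s$, and pairwise disjointness of the $D_s$ finishes it. Your single pair is the $s=\langle 1\rangle$ case of this; the full tree is what is actually needed.

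For the constraint $\langle\HH_\alpha\cup\{f_\alpha\}\rangle[X]\cap Y=\varnothing$: the committed sets $X_\alpha$, $Y_\alpha$ are of size $<\cccc$ but need not be dense, let alone $\lambda$-dense, so you cannot feed them directly into Lemma~\ref{bb}. The paper's trick is to first enlarge $X$ to some $X^\ast$ with $\HH_\alpha[X^\ast]\cap Y=\varnothing$ and $X^\ast\setminus(A\cup B)$ $\lambda$-dense, and then require the \emph{single} condition $f_\alpha[\HH_\alpha[X^\ast]\setminus A]=\HH_\alpha[X^\ast]\setminus B$. Combined with $f_\alpha[A]=B$, this forces $f_\alpha[\HH_\alpha[X^\ast]]=\HH_\alpha[X^\ast]$, so the entire group $\langle\HH_\alpha\cup\{f_\alpha\}\rangle$ fixes $\HH_\alpha[X^\ast]$ setwise and in particular stays disjoint from $Y$. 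No further tree of pairs is needed on this side.

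Two minor points: the paper starts with $\HH_0=\{\id\}$ rather than $\VV$ (there is no need for $\VV$ once you are aiming for CDH), and it does \emph{not} maintain $\HH_\alpha[Y_\alpha]=Y_\alpha$; only $X$ is closed under $\HH$, and the paper handles the bookkeeping ``each pair eventually gets treated'' by listing each pair cofinally often rather than appealing to regularity of $\cccc$.
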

\begin{proof}
Once again, we plan to apply Theorem \ref{general}. Let $\GG=\{g_\alpha:\alpha\in\cccc\}$ be an enumeration. Enumerate as $\{(A_\alpha,B_\alpha):\alpha\in\cccc\}$ all pairs of countable dense subsets of $2^\omega$, making sure to list each pair cofinally often. Fix a countable dense subset $D$ of $2^\omega$ such that $D\cap Q=\varnothing$.

Using transfinite recursion, we will construct an increasing sequence $\langle \HH_\alpha:\alpha\in\cccc\rangle$ consisting of subgroups of $\HH(2^\omega)$, together with increasing sequences $\langle X_\alpha:\alpha\in\cccc\rangle$ and $\langle Y_\alpha:\alpha\in\cccc\rangle$ consisting of subsets of $2^\omega$, so that the following conditions are satisfied for every $\alpha\in\cccc$.
\begin{enumerate}
\item[(I)] $|X_\alpha|<\cccc$ and $|Y_\alpha|<\cccc$.
\item[(II)] $X_\alpha\cap Y_\alpha=\varnothing$.
\item[(III)] $|\HH_\alpha|<\cccc$.
\item[(IV)] $\HH_\alpha[X_\alpha]=X_\alpha$.
\item[(V)] If $h\in\HH_\alpha\setminus\{\id\}$ then $h[D]\cap D=\varnothing$.
\item[(VI)] If $|\{x\in\dom(g_\alpha):g_\alpha(x)\notin\HH_\alpha(x)\}|=\cccc$ then there exists $z\in\dom(g_\alpha)\cap X_{\alpha+1}$ such that $g(z)\in Y_{\alpha+1}$.
\item[(VII)] If $A_\alpha\cup B_\alpha\subseteq X_\alpha$ then there exists $f\in\HH_{\alpha+1}$ such that $f[A_\alpha]=B_\alpha$.
\end{enumerate}
Start by setting $\HH_0=\{\id\}$, $X_0=\varnothing$ and $Y_0=D$. Take unions at limit stages. At a successor stage $\alpha+1$, suppose that $\HH_\alpha$, $X_\alpha$ and $Y_\alpha$ have already been constructed. Assume that $|\{x\in\dom(g_\alpha):g_\alpha(x)\notin\HH_\alpha(x)\}|=\cccc$. Then, it is possible to fix
$$
z\in\dom(g_\alpha)\setminus (\HH_\alpha[Y_\alpha]\cup g_\alpha^{-1}[X_\alpha])
$$
such that $g_\alpha(z)\notin\HH_\alpha(z)$. First we will construct $\HH_{\alpha+1}$. If $A_\alpha\cup B_\alpha\nsubseteq X_\alpha$, set $\HH_{\alpha+1}=\HH_\alpha$. If $A_\alpha\cup B_\alpha\subseteq X_\alpha$, let $f\in\HH(2^\omega)$ be obtained by applying Lemma \ref{keylemma} with $\HH=\HH_\alpha$, $X=X_\alpha\cup\{z\}$, $Y=Y_\alpha\cup\{g_\alpha(z)\}$, $A=A_\alpha$ and $B=B_\alpha$, then set $\HH_{\alpha+1}=\langle\HH_\alpha\cup\{f\}\rangle$. Finally, set $X_{\alpha+1}=\HH_{\alpha+1}[X_\alpha\cup\{z\}]$ and $Y_{\alpha+1}=Y_\alpha\cup\{g_\alpha(z)\}$. Conclude the construction by setting $X=\bigcup_{\alpha\in\cccc}X_\alpha$ and $\HH=\bigcup_{\alpha\in\cccc}\HH_\alpha$.

Notice that $\HH[X]=X$ by condition (IV). Using condition (VII), it is straightforward to check that $X$ is countable dense homogeneous. It is clear that conditions (\ref{preserve}) and (\ref{pushaway}) of Theorem \ref{general} are satisfied. To see that condition (\ref{kill}) holds, assume that $g\in\GG$ and $|\{x\in\dom(g):g(x)\notin\Ss(x)\}|=\cccc$ for every subgroup $\Ss$ of $\HH$ such that $|\Ss|<\add$. Fix $\alpha\in\cccc$ such that $g_\alpha=g$, and notice that $|\{x\in\dom(g_\alpha):g_\alpha(x)\notin\HH_\alpha(x)\}|=\cccc$ because $|\HH_\alpha|<\cccc=\add$ by condition (III) and $\MAsigma$. It follows from conditions (VI) and (II) that there exists $z\in\dom(g)\cap X$ such that $g(z)\notin X$.
\end{proof}

\begin{lemma}\label{keylemma}
Assume $\MAsigma$. Furthermore, assume that the following requirements are satisfied.
\begin{itemize}
\item $X$ and $Y$ are subsets of $2^\omega$ of size less than $\cccc$.
\item  $\HH$ is a subgroup of $\HH(2^\omega)$ of size less than $\cccc$.
\item $\HH[X]\cap Y=\varnothing$.
\item $D$ is a countable dense subset of $2^\omega$ such that $D\subseteq Y$.
\item $h[D]\cap D=\varnothing$ for every $h\in\HH\setminus\{\id\}$.
\item $A$ and $B$ are countable dense subsets of $2^\omega$ such that $A\cup B\subseteq X$.
\end{itemize}
\newpage
\noindent Then there exists $f\in\HH(2^\omega)$ such that the following conditions hold.
\begin{enumerate}
\item $f[A]=B$.
\item $\langle\HH\cup\{f\}\rangle[X]\cap Y=\varnothing$.
\item $h[D]\cap D=\varnothing$ for every $h\in\langle\HH\cup\{f\}\rangle\setminus\{\id\}$.
\end{enumerate}
\end{lemma}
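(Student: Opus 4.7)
The plan is to invoke the Baldwin--Beaudoin Lemma \ref{bb} with a carefully chosen family of fewer than $\cccc$ many pairwise disjoint pairs of $\lambda$-dense subsets of $2^\omega$, producing a homeomorphism $f$ that satisfies all three required conditions. The whole argument is possible because $\MAsigma$ gives $\add=\cccc$, so every obstruction set of size less than $\cccc$ is ``small''.

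First I would set $X^\star=\HH[X]$ and $Y^\star=\HH[Y]$. From $\HH[X]\cap Y=\varnothing$ one deduces $X^\star\cap Y^\star=\varnothing$; both are dense in $2^\omega$ (since $A\subseteq X^\star$ and $D\subseteq Y^\star$ are dense) and of size less than $\cccc$. Replacing $X$ by $X^\star$ does not affect the hypotheses or the conclusion, so I may assume $\HH[X]=X$. Using that every relevant obstruction set has size less than $\cccc$ while $2^\omega$ itself has size $\cccc$, I would construct a $\ZZZ$-indexed family $\{D_n:n\in\ZZZ\}$ of pairwise disjoint countable dense subsets of $2^\omega\setminus X$, with $D_0=D$, such that the sets $\HH[D_n]$ are pairwise disjoint and $h[D_n]\cap D_n=\varnothing$ for every $h\in\HH\setminus\{\id\}$ and every $n\in\ZZZ$.

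Then I would apply Lemma \ref{bb} with the following pairs: $(A,B)$, ensuring $f[A]=B$; a pair forcing $f[X]=X$, for instance $(X\setminus A,X\setminus B)$ (after enlarging if needed to achieve uniform $\lambda$-density for some $\lambda<\cccc$); and, for each $n\in\ZZZ$, the pairs $(D_n,D_{n+1})$ together with $(\HH[D_n]\setminus D_n,\HH[D_{n+1}]\setminus D_{n+1})$, which jointly force $f[\HH[D_n]]=\HH[D_{n+1}]$. The pairwise disjointness of the $A$-sides and $B$-sides is ensured by the pairwise disjointness of the $\HH[D_n]$ and by the disjointness of $D_n$ from $X^\star$. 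Condition~(1) is immediate from the pair $(A,B)$; condition~(2) follows because $f[X]=X$ together with $\HH[X]=X$ implies that every $w\in G:=\langle\HH\cup\{f\}\rangle$ satisfies $w[X]=X$, which is disjoint from $Y\subseteq Y^\star$. Condition~(3) is verified by a case analysis on the normal form $w=h_0f^{\epsilon_1}h_1\cdots f^{\epsilon_k}h_k$ of an element of $G\setminus\{\id\}$: a straightforward induction on $k$ shows that $w(d)\in\HH[D_N]$ with $N=\sum_i\epsilon_i$, so if $N\neq 0$ then $w(d)\notin\HH[D_0]\supseteq D$.

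The main obstacle is the case $N=0$ with $w\notin\HH$ (in particular commutators such as $fhf^{-1}h^{-1}$): there $w(d)$ lands in $\HH[D_0]=\HH[D]$, and a naive ``symmetric'' pairing can produce a genuine collision $w(d)\in D$. To exclude this I would further refine the pairs governing $f$ on $\HH[D_n]\setminus D_n$ by breaking the $\HH$-symmetry, e.g.\ decomposing $\HH[D_n]\setminus D_n=\bigsqcup_{h\neq\id}h[D_n]$ and pairing $(h[D_n],\sigma(h)[D_{n+1}])$ where $\sigma\colon\HH\to\HH$ is a function with $\sigma(\id)=\id$ and $\sigma(h)\neq h^{-1}$ for $h\neq\id$, so that the resulting image $w(d)\in hh'\sigma(h'')^{-1}h'''\cdots[D]$ never reduces to $D$ for a non-trivial $w$. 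A secondary (but routine) technical issue is arranging the density hypotheses of Lemma \ref{bb} uniformly across all the pairs, which is tractable because all the sets involved have cardinality below $\cccc=\add$ and the non-trivial $\HH$-translates of countable dense sets can be taken to provide the missing density.
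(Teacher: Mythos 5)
Your overall strategy --- apply Lemma \ref{bb} to the pair $(A,B)$, to a pair forcing invariance of an $\HH$-invariant enlargement of $X$, and to a family of pairwise disjoint auxiliary dense sets that track the orbit of $D$ --- is the same as the paper's, and you have correctly isolated the one genuinely delicate point, namely the words of total $f$-exponent $0$ that do not lie in $\HH$. But your proposed resolution of that point does not work. With the pairing $f[h[D_n]]=\sigma(h)[D_{n+1}]$, take any $h\in\HH\setminus\{\id\}$ and set $w=\sigma(h)^{-1}fhf^{-1}$. Chasing pieces: $f^{-1}$ maps $D=D_0$ onto $D_{-1}$ (the pair with label $\id$ and level $-1$), $h$ maps that onto $h[D_{-1}]$, $f$ maps that onto $\sigma(h)[D_0]$, and $\sigma(h)^{-1}$ maps that back onto $D_0=D$. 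So $w[D]=D$ while $w\neq\id$, since Lemma \ref{bb} gives no control whatsoever over $fhf^{-1}$ as a global homeomorphism and hence you cannot arrange $fhf^{-1}=\sigma(h)$. The condition $\sigma(h)\neq h^{-1}$ only blocks the particular label word you wrote down; an adversarial word gets to insert an arbitrary element of $\HH$ after the final $f$, so no pointwise condition on a single function $\sigma\colon\HH\to\HH$ can prevent the composite label from collapsing to $\id$. Structurally, the trouble is that your pieces are indexed by $\ZZZ\times\HH$, and the induced action of $\langle\HH\cup\{f\}\rangle$ on this label set has nontrivial stabilizer at $(\id,0)$ no matter how $\sigma$ is chosen.

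The paper avoids this by indexing the auxiliary dense sets by the set $\TT$ of \emph{reduced words} in $\HH^\ast\cup\{f^{\pm1}\}$: it fixes a family $\DD$ of pairwise disjoint countable dense subsets of a comeager set of points with trivial $\HH$-stabilizer, with pairwise disjoint $\HH$-orbits and disjoint from $\HH[X^\ast\cup Y]$; it assigns a fresh member of $\DD$ to each reduced word beginning with $f^{\pm1}$, sets $D_{\langle h\rangle^\frown s}=h[D_s]$ for words beginning with $h\in\HH^\ast$, and has Lemma \ref{bb} realize $f[D_s]=D_{\langle 1\rangle^\frown s}$ and $f[D_{\langle -1\rangle^\frown s}]=D_s$. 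An induction on word length then gives $w[D]=D_s$ whenever $w$ is represented by the reduced word $s$, and since the $D_s$ are pairwise disjoint and $D=D_\varnothing$, only $w=\id$ can meet $D$. In effect the orbit of the piece $D$ is forced to be a free orbit, and any correct repair of your $\ZZZ\times\HH$ bookkeeping would have to enlarge the label set to precisely this set of reduced words. Your remaining steps --- the reduction to $\HH[X]=X$, the pair forcing $f$ to fix the enlarged $X$ setwise, and the cardinality and density bookkeeping under $\add=\cccc$ --- are fine and agree with the paper.
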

\begin{proof}
Let $\lambda=\max\{|X|,|\HH|,\omega\}$, and notice that $\lambda<\cccc$. Start by constructing $X^\ast\supseteq X$ so that $X^\ast\setminus (A\cup B)$ is $\lambda$-dense in $2^\omega$ and $\HH[X^\ast]\cap Y=\varnothing$. Notice that $\HH[X^\ast]\setminus (A\cup B)$ will still be $\lambda$-dense in $2^\omega$. We claim that any $f\in\HH(2^\omega)$ that satisfies the following conditions will also satisfy conditions (1) and (2).
\begin{enumerate}
\item[(I)] $f[A]=B$.
\item[(II)] $f[\HH[X^\ast]\setminus A]=f[\HH[X^\ast]\setminus B]$.
\end{enumerate}
In fact, conditions (I) and (II) immediately imply that $f[\HH[X^\ast]]=\HH[X^\ast]$, and therefore
$$
\langle\HH\cup\{f\}\rangle[X]\subseteq\langle\HH\cup\{f\}\rangle[\HH[X^\ast]]=\HH[X^\ast],
$$
which is disjoint from $Y$ by construction.

Let $\HH^\ast=\HH\setminus\{\id\}$. Define $T_h=\{x\in 2^\omega:h(x)=x\}$ for $h\in\HH$, and observe that each $T_h$ is closed. Furthermore, if $h\in\HH^\ast$ then $T_h$ is nowhere dense, since $D\subseteq 2^\omega\setminus T_h$. Define
$$
C=\{x\in 2^\omega:h(x)\neq x\text{ for every }h\in\HH^\ast\}.
$$
Since $|\HH^\ast|<\cccc=\add$ by $\MAsigma$, one sees that $C=2^\omega\setminus\bigcup_{h\in\HH^\ast}T_h$ is comeager in $2^\omega$. In particular, $C$ is $\cccc$-dense in $2^\omega$, so it is possible to fix a collection $\DD$ of size $\lambda$ consisting of countable dense subsets of $2^\omega$ such that the following conditions hold.
\begin{itemize}
\item $\bigcup\DD\subseteq C\setminus\HH[X^\ast\cup Y]$.
\item $E\cap F=\varnothing$ whenever $E,F\in\DD$ and $E\neq F$.
\item $\HH(x)\cap\HH(y)=\varnothing$ whenever $x,y\in\bigcup\DD$ and $x\neq y$.
\end{itemize}
Given arbitrary $E,F\in\DD$ and $h,g\in\HH$, one can easily verify that $h[E]\cap g[F]=\varnothing$ unless $E=F$ and $h=g$.

Let $\TT$ be the set of all $s\in (\HH^\ast\cup\{-1,1\})^{<\omega}$ such that the following conditions hold whenever $\{k,k+1\}\subseteq\dom(s)$. For notational convenience, we will always assume that $s=\langle s_{n-1},\ldots,s_0\rangle$, where $n=\dom(s)$.
\begin{itemize}
\item If $s_k\in\HH^\ast$ then $s_{k+1}\in\{-1,1\}$.
\item If $s_k\in\{-1,1\}$ then $s_{k+1}\in\{s_k\}\cup\HH^\ast$.
\end{itemize}
Notice that $|\TT|\leq\lambda<\cccc$. Suppose that some $f\in\HH(2^\omega)$ has been chosen. Given $s\in\TT$ such that $\dom(s)=n$, we will say that $w\in\langle\HH\cup\{f\}\rangle$ is of \emph{type} $s$ if it can be written as
$$
w=h_{n-1}\cdots h_1h_0,
$$
where $h_k=s_k$ if $s_k\in\HH^\ast$ and $h_k=f^{s_k}$ if $s_k\in\{-1,1\}$. In particular, $\id$ is the only element of type $\varnothing$.

Define $D_s$ for $s\in\TT$ so that the following conditions hold.
\begin{itemize}
\item $D_\varnothing=D$.
\item $D_{\langle h\rangle^\frown s}=h[D_s]$ whenever $h\in\HH^\ast$ and $\langle h\rangle^\frown s\in\TT$.
\item $\DD=\{D_{\langle 1\rangle^\frown s}:s\in\TT\}\cup\{D_{\langle -1\rangle^\frown s}:s\in\TT\}$ is an injective enumeration.
\end{itemize}
Using the properties of $\DD$, it is straightforward to verify that $D_s\cap D_t=\varnothing$ whenever $s,t\in\TT$ and $s\neq t$.
Therefore, by Lemma \ref{bb}, we can fix $f\in\HH(2^\omega)$ that satisfies the following conditions, together with conditions (I) and (II).
\begin{enumerate}
\item[(III)] $f[D_s]=D_{\langle 1\rangle^\frown s}$ whenever $\langle 1\rangle^\frown s\in\TT$.
\item[(IV)] $f[D_{\langle -1\rangle^\frown s}]=D_s$  whenever $\langle -1\rangle^\frown s\in\TT$.
\end{enumerate}

In order to see that condition (3) holds, we will use induction on $n=\dom(s)$ to prove that $w[D]=D_s$ whenever $w\in\langle\HH\cup\{f\}\rangle$ is of type $s$. The case $n=0$ is trivial, so assume that $n>0$. First assume that $s=\langle h\rangle^\frown s'$ for some $h\in\HH^\ast$ and $s'\in\TT$. This means that $w=hw'$ for some $w'$ of type $s'$. Using the inductive assumption $w'[D]=D_{s'}$, one sees that
$$
w[D]=h[w'[D]]=h[D_{s'}]=D_{\langle h\rangle^\frown s'}=D_s.
$$
Now assume that $s=\langle 1\rangle^\frown s'$ for some $s'\in\TT$. This means that $w=fw'$ for some $w'$ of type $s'$. Using condition (III) and the inductive assumption $w'[D]=D_{s'}$, one sees that
$$
w[D]=f[w'[D]]=f[D_{s'}]=D_{\langle 1\rangle^\frown s'}=D_s.
$$
The case in which $s=\langle -1\rangle^\frown s'$ for some $s'\in\TT$ can be dealt with similarly, using the fact that $f^{-1}[D_{s'}]=D_{\langle -1\rangle^\frown s'}$ by condition (IV).
\end{proof}

\begin{corollary}\label{cdhnotrcdh}
Assume $\MAsigma$. Then there exists a countable dense homogeneous subspace of $2^\omega$ that is not relatively countable dense homogeneous.
\end{corollary}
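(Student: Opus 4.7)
My plan is to take $X$ to be the countable dense homogeneous subspace of $2^\omega$ produced by Theorem \ref{cdh}, whose complement $Y = 2^\omega \setminus X$ is dense in $2^\omega$ and rigid, and to show that this very $X$ fails to be relatively countable dense homogeneous.

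The driving observation is that the rigidity of $Y$, combined with its density in $2^\omega$, forces the only $h \in \HH(2^\omega)$ with $h[X] = X$ to be the identity. Indeed, any such $h$ preserves $Y$ setwise, so $h \upharpoonright Y$ is a self-homeomorphism of the rigid space $Y$ and therefore equals $\id_Y$; the continuity of $h$ together with the density of $Y$ in $2^\omega$ then upgrades this to $h = \id$ on all of $2^\omega$. Granting this, I would argue by contradiction: if $X$ were relatively countable dense homogeneous, then for any two countable dense subsets $A$ and $B$ of $X$ there would be $h \in \HH(2^\omega)$ with $h[X] = X$ and $h[A] = B$, but the observation forces $h = \id$ and hence $A = B$.

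It remains to exhibit two distinct countable dense subsets of $X$. Pick any countable dense subset $A$ of $X$, which exists by separability, and any $a \in A$: I claim $B = A \setminus \{a\}$ is still dense in $X$, which yields the required pair. This reduces to showing $X$ is crowded, since in a crowded space every neighborhood $U$ of $a$ in $X$ has the property that $U \setminus \{a\}$ is a non-empty open subset, which by density of $A$ must meet $A$, so $U$ meets $B$. The step I expect to require the most care is precisely the crowdedness of $X$, which in turn follows from $X$ being uncountable (so that the standard fact that countable dense homogeneity implies homogeneity for separable metrizable spaces rules out a discrete $X$); the uncountability of $X$ should come from the $\cccc$-length recursion in the proof of Theorem \ref{cdh}, which adds fresh points to $X$ at cofinally many successor stages and thus yields $|X| = \cccc$.
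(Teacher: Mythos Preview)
Your overall strategy is exactly the intended one: the corollary is meant to follow immediately from Theorem \ref{cdh} via precisely the observation you make---any $h\in\HH(2^\omega)$ with $h[X]=X$ restricts to a self-homeomorphism of the rigid dense set $Y$, hence $h=\id$ by continuity, so relative countable dense homogeneity of $X$ would force all countable dense subsets of $X$ to coincide.

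There is, however, a genuine gap in your last step. You reduce the existence of two distinct countable dense subsets of $X$ to the crowdedness of $X$, and then invoke ``the standard fact that countable dense homogeneity implies homogeneity for separable metrizable spaces''. That implication is \emph{false}: for example, $\{0\}\cup [1,2]\subseteq\RRR$ is countable dense homogeneous (every self-homeomorphism must fix the unique isolated point $0$, and $[1,2]$ is countable dense homogeneous) but is obviously not homogeneous. So your route to crowdedness does not go through as written.

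The repair is immediate and sidesteps crowdedness entirely. You already argue that $|X|=\cccc$; so if $A$ is any countable dense subset of $X$, pick $x\in X\setminus A$ and set $B=A\cup\{x\}$. Then $A\neq B$ are both countable and dense in $X$, and your contradiction goes through. (Alternatively, one can verify directly from the construction in Theorem \ref{cdh} that $X$ is dense in $2^\omega$ and hence crowded, but this is unnecessary work.)
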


\begin{question}
Is it possible to prove in $\ZFC$ that there exists a countable dense homogeneous subspace of $2^\omega$ whose complement is dense in $2^\omega$ and rigid?
\end{question}

\end{document}